\def\marginpar#1{\ignorespaces}
\newtheorem{theorem}[equation]{Theorem}
\newtheorem{proposition}[equation]{Proposition}
\newtheorem{lemma}[equation]{Lemma}
\theoremstyle{definition}
\newtheorem{remark}[equation]{Remark}
\numberwithin{equation}{section}
\def\AArm{\fam0 \rm}%
\newdimen\AAdi%
\newbox\AAbo%
\def\AAk#1#2{\setbox\AAbo=\hbox{#2}\AAdi=\wd\AAbo\kern#1\AAdi{}}%
\newcommand{\BBone}{{\ensuremath{{\AArm 1\AAk{-.8}{I}I}}}}
\def\eqref#1{(\ref{#1})}
\def\eqlabel#1{\def\@currentlabel{#1}}
\def\formula#1{\def\@tempa{#1}\let\@tempb\theequation\def\theequation{%
\hbox{#1}}\def\@currentlabel{(\theequation)}$$}
\def\endformula{\leqno\hbox{(\@tempa)}$$\@ignoretrue\let\theequation\@tempb}
\def\given{\hskip5\p@\relax\vrule\@width.4\p@\hskip5\p@\relax}
\newcommand{\open}[1]{%
\par\normalfont\topsep6\p@\@plus6\p@\trivlist\item[\hskip\labelsep\itshape#1%
\@addpunct{.}]\ignorespaces}
\DeclareRobustCommand{\close}[1]{%
  \ifmmode 
  \else \leavevmode\unskip\penalty9999 \hbox{}\nobreak\hfill
  \fi
  \quad\hbox{$#1$}}
\newlength{\toskip}\settowidth{\toskip}{(\theequation)}
\def\<{\langle}
\def\>{\rangle}
\def \R {{\mathbb R}}
\def \D {{\mathbb D}}
\def \L {{\mathbb L}}
\def \Var {\textrm{Var}}
\begin{document}
\date{\today}

\title[Poincar\'e and $\L^p$]{Poincar\'e inequality and the $\L^p$ convergence of semi-groups.}

 \author[P. Cattiaux]{\textbf{\quad {Patrick} Cattiaux $^{\spadesuit}$}}
\address{{\bf {Patrick} CATTIAUX},\\ Institut de Math\'ematiques de Toulouse. UMR 5219. \\
Universit\'e Paul Sabatier,
\\ 118 route
de Narbonne, F-31062 Toulouse cedex 09.} \email{cattiaux@math.univ-toulouse.fr}

\author[A. Guillin]{\textbf{\quad {Arnaud} Guillin $^{\diamondsuit}$}}
\address{{\bf {Arnaud} GUILLIN},\\ Laboratoire de Math\'ematiques. UMR 6620, Universit\'e Blaise Pascal,
avenue des Landais, F-63177 Aubi\`ere.} \email{guillin@math.univ-bpclermont.fr}

\author[C. Roberto]{\textbf{\quad {Cyril} Roberto ${\clubsuit}$}}
\address{{\bf {Cyril} ROBERTO},\\
Laboratoire d'Analyse et Math\'ematiques Appliqu\'ees. UMR 8050, Universit\'es de Marne la Vall\'ee Paris Est et de Paris 12-Val-de-Marne\\
Boulevard Descartes, Cit\'e Descartes, Champs sur Marne\\
F-77454 Marne la Vall\'ee Cedex 2.} \email{cyril.roberto@univ-mlv.fr}

\thanks{The work of C.R. was partially supported by the European Research Council through the ``Advanced Grant''
PTRELSS 228032. P.C. and A.G. acknowledge the support of the ANR project EVOL.}

\maketitle
 \begin{center}

 \textsc{$^{\spadesuit}$  Universit\'e de Toulouse}
\smallskip

\textsc{$^{\diamondsuit}$ Universit\'e Blaise Pascal}
\smallskip

\textsc{$^{\clubsuit}$ Universit\'e Marne la Vall\'ee Paris Est}
\smallskip

 \end{center}

\begin{abstract}
We prove that for symmetric Markov processes of diffusion type admitting a ``carr\'e du champ'',
the Poincar\'e inequality is equivalent to the exponential convergence of the associated
semi-group in one (resp. all) $\L^p(\mu)$ spaces for $1<p<+\infty$. Part of this result extends to
the stationary non necessarily symmetric situation.
\end{abstract}
\bigskip

\textit{ Key words :}  Poincar\'e inequality, rate of convergence.
\bigskip

\textit{ MSC 2010 :} 26D10, 39B62, 47D07, 60G10, 60J60.
\bigskip

\section{Introduction and main results.}\label{Intro}

Let $X_t$ be a general Markov processes with infinitesimal generator $L$ and with state space some
Polish space $E$. We assume that the extended domain of the generator contains a nice core
$\mathcal D$ of uniformly continuous functions, containing the constant functions, which is an
algebra, for which we may define the ``carr\'e du champ'' operator
$$\Gamma(f,g) = \frac 12 \, \left(L(fg) - f Lg - gLf \right) \, .$$ Functions in $\mathcal D$ will
be called ``smooth''. The associated Dirichlet form can thus be calculated for smooth $f$'s as
$$\mathcal E(f,f) \, := \, - \, \int \, f \, Lf \, d\mu \, = \, \int \, \Gamma(f,f) \, d\mu \, .$$

In addition we assume that $L$ is $\mu$-symmetric for some probability measure defined on $E$.
Thus $L$ generates a $\mu$-symmetric (hence stationary) semi-group $P_t$, which is a contraction
semi-group on all $\L^p(\mu)$ for $1\leq p \leq +\infty$, and the $\L^2$ ergodic theorem (in the
symmetric case) tells us that for all $f\in \L^2(\mu)$,
$$\lim_{t \to +\infty} \, \parallel P_tf \, - \, \int f \, d\mu\parallel_{\L^2(\mu)} \, = \, 0 \,
.$$
For all this one can give a look at \cite{cat4}. Here and in the sequel, for any $p \in [1,\infty)$, $\parallel f \parallel_{\L^p(\mu)}$, or in a shorter way $\parallel f \parallel_p$,
stands for the $\mathbb{L}^p(\mu)$-norm of $f$ with respect to $\mu$: $\parallel f \parallel_p^p := \int |f|^p d\mu$.

\smallskip

It is then well known that the following two statements are equivalent
\begin{itemize}
\item[] (\textbf{H-Poinc}). \quad  $\mu$ satisfies a Poincar\'e inequality, i.e. there exists a
constant $C_P$ such that for all smooth $f$,
$$\Var_\mu(f):=\int f^2d\,\mu-\left(\int f\,d\mu\right)^2 \, \leq \, C_P \, \int \, \Gamma(f,f) \, d\mu \, .$$  \item[](\textbf{H-2}). \quad
There exists a constant $\lambda_2$ such that $$\Var_\mu(P_tf) \, \leq \, e^{- \, 2 \, \lambda_2
\, t} \, \Var_\mu(f) \, .$$
\end{itemize}
If one of these assumptions is satisfied we have $\lambda_2=1/C_P$.
\smallskip

In the sequel we shall assume in addition that $\Gamma$ comes from a derivation, i.e.
$$\Gamma(fg,h) = f \, \Gamma(g,h) + g \, \Gamma(f,h) \, ,$$ i.e. (in the terminology of
\cite{logsob}) that $X_.$ is a diffusion. We also recall the chain rule: if $\varphi$ is a $C^2$
function,
$$L(\varphi(f))=\varphi'(f) \, Lf + \varphi''(f) \, \Gamma(f,f) \, .$$
In this note we shall establish the following theorem

\begin{theorem}\label{thmmain}
For $f\in \L^p(\mu)$ define $N_p(f) \, := \,
\parallel f - \int f\, d\mu\parallel_p$. The following statements are equivalent
\begin{enumerate}
\item[(1)] \quad (\textbf{H-Poinc}) is satisfied, \item[(2)] \quad there exist some $1<p<+\infty$
and constants $\lambda_p$ and $K_p$ such that for all $f\in \L^p(\mu)$, $$N_p(P_tf) \, \leq \, K_p
\, e^{-\lambda_p \, t} \, N_p(f) \, ,$$ \item[(3)] \quad for all $1<p<+\infty$, there exist some
constants $\lambda_p$ and $K_p$ such that for all $f\in \L^p(\mu)$, $$N_p(P_tf) \, \leq \, K_p \,
e^{-\lambda_p \, t} \, N_p(f) \, .$$
\end{enumerate}
\end{theorem}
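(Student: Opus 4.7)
The plan is to reduce everything to operator norm estimates on the centered semigroup $T_t:=P_t-\Pi$, where $\Pi f:=\int f\,d\mu$, and then to apply the Riesz--Thorin interpolation theorem. Three features of $T_t$ will be pivotal: the semigroup identity $T_sT_t=T_{s+t}$ (which follows from $P_s\Pi=\Pi P_t=\Pi^2=\Pi$ by stationarity), the intertwining $T_t=T_t\circ(I-\Pi)$ giving
$$N_r(P_tf)=\|T_t(f-\bar f)\|_r\leq \|T_t\|_{\L^r\to\L^r}\,N_r(f),$$
and self-adjointness $T_t^*=T_t$ on $\L^2(\mu)$ inherited from symmetry of $P_t$. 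Self-adjointness yields the duality $\|T_t\|_{\L^p\to\L^p}=\|T_t\|_{\L^{p'}\to\L^{p'}}$ for conjugate exponents. The whole theorem thus reduces to proving exponential decay of $\|T_t\|_{\L^r\to\L^r}$.

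The implication $(3)\Rightarrow(2)$ is immediate. For $(1)\Rightarrow(3)$, I would combine the classical $\L^2$-decay $\|T_t\|_{\L^2\to\L^2}\leq e^{-t/C_P}$ (recalled in the introduction via (\textbf{H-Poinc})$\Leftrightarrow$(\textbf{H-2})) with the trivial $\|T_tg\|_\infty\leq 2\|g\|_\infty$, and interpolate: for every $r\geq 2$,
$$\|T_t\|_{\L^r\to\L^r}\leq 2^{1-2/r}\,e^{-2t/(rC_P)},$$
while the range $r\in(1,2)$ follows by self-adjointness. For $(2)\Rightarrow(1)$, the hypothesis together with $\|f-\bar f\|_p\leq 2\|f\|_p$ gives $\|T_t\|_{\L^p\to\L^p}\leq 2K_p\,e^{-\lambda_pt}$, hence the same bound on $\L^{p'}$ by self-adjointness. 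Since $2$ lies between $p$ and $p'$, a further Riesz--Thorin step delivers $\|T_t\|_{\L^2\to\L^2}\leq 2K_p\,e^{-\lambda_pt}$. The spectral theorem applied to the self-adjoint generator $L$ restricted to the mean-zero subspace of $\L^2(\mu)$ identifies $\|T_t\|_{\L^2\to\L^2}=e^{-\gamma t}$, where $\gamma\geq 0$ is the spectral gap; letting $t\to\infty$ in the interpolated bound forces $\gamma\geq\lambda_p>0$, which is (\textbf{H-2}) with $C_P\leq 1/\lambda_p$, and hence (\textbf{H-Poinc}).

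The main obstacle is the implication $(2)\Rightarrow(1)$: deriving a functional inequality from a purely quantitative decay estimate in which the multiplicative constant $K_p>1$ appears. Direct differentiation at $t=0$ is blocked by this constant. The resolution is to route through operator norms to $\L^2$ and then through the spectral theorem, which in the self-adjoint setting automatically upgrades any bound $\|T_t\|_{\L^2\to\L^2}\leq K e^{-\lambda t}$ to the sharp form $e^{-\gamma t}$ with $\gamma\geq\lambda$, so the constant $2K_p$ is harmlessly absorbed. Self-adjointness is the ingredient that drives both the duality $\|T_t\|_p=\|T_t\|_{p'}$ used in the interpolation and the spectral upgrade at the end; this explains why, as hinted in the abstract, only a partial extension to the non-symmetric setting is to be expected.
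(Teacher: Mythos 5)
Your argument is correct, and in the direction $(1)\Rightarrow(3)$ it takes a genuinely different --- and much shorter --- route than the paper's. The paper proves $(1)\Rightarrow(3)$ twice (Theorems \ref{thmpetit} and \ref{thmgrand}), in both cases by an induction over the dyadic exponents $p=2^k$ that leans on the diffusion structure (chain rule, integration by parts, the truncations $\varphi_u$, Gronwall's lemma), with Riesz--Thorin used only to fill in between consecutive powers of $2$. Your single interpolation of $T_t=P_t-\Pi$ between $\left(\L^2,\ e^{-t/C_P}\right)$ and $\left(\L^\infty,\ 2\right)$ bypasses all of this, never uses the carr\'e du champ or the derivation property, and already yields $\lambda_p=2/(p\,C_P)$ and $K_p=2^{1-2/p}$ for every $p\geq 2$ --- that is, the ``almost optimal'' rate of Theorem \ref{thmgrand}, and for all $p$ rather than only for powers of $2$. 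What this buys in simplicity it loses in one respect: it cannot produce $K_p=1$, which is the whole point of Theorem \ref{thmpetit} and is what the paper needs for isoperimetric applications and for the non-symmetric extension in the final section; for Theorem \ref{thmmain} as stated, however, $K_p=1$ is irrelevant. Your $(2)\Rightarrow(1)$ is essentially the paper's: the duality $\|T_t\|_{\L^p\to\L^p}=\|T_t\|_{\L^{p'}\to\L^{p'}}$ is Lemma \ref{lempq}, and your spectral-theorem upgrade of $\|T_t\|_{\L^2\to\L^2}\leq 2K_p\,e^{-\lambda_p t}$ to a genuine spectral gap is exactly the content of Lemma \ref{lemdecayf}, which the paper proves instead via the elementary log-convexity of $t\mapsto\log\|P_tf\|_{\L^2(\mu)}$ precisely to avoid invoking the spectral resolution. (Your intermediate Riesz--Thorin step down to $\L^2$ is also dispensable: one of $p,p'$ is always $\geq 2$, and there $N_2\leq N_p$ directly.) The only point worth flagging is routine: Riesz--Thorin is a theorem about complex scalars, so when working with real $\L^p(\mu)$ one should allow a harmless absolute factor in $K_p$ after complexifying $T_t$; this does not affect the equivalence.
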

We shall denote by (\textbf{H-p}) the property (2) for a given $p$.
\medskip

Of course (3) implies (2). The fact that (2) implies (1) is a consequence of the following Lemma
which seems to be well known by the specialists in Statistical Physics (we learned this result
from P. Caputo and P. Dai Pra) and is used in \cite{Wu} (see lemma 2.6 therein). We shall give
however a very elementary proof in the next section.

\begin{lemma}\label{lemdecayf}
If it exists $\beta > 0$ such that for all $f\in \mathcal C$, $\mathcal C$ being an everywhere
dense subset of $\L^2(\mu)$, the following holds $\Var_\mu (P_t f) \leq c_f \, e^{-2 \, \beta \,
t} \, ,$ then $\Var_\mu (P_t f) \leq e^{-2 \, \beta \, t} \, \Var_\mu(f)$ for all $f \in
\L^2(\mu)$, i.e. the Poincar\'e inequality holds with $C_P \leq 1/\beta$.
\end{lemma}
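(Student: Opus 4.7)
The plan is to exploit the log-convexity of $V(t) := \|P_t g\|_{\L^2(\mu)}^2$ (for $g := f - \int f\,d\mu$), which comes for free from the $\mu$-symmetry of $P_t$, and combine it with the hypothesized asymptotic decay rate $2\beta$ to obtain, by a purely convex-function argument, the sharp inequality $V(t) \leq V(0)\, e^{-2\beta t}$. Once this holds on the dense set $\mathcal C$, density and differentiation at $t = 0$ will finish the proof.

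More precisely, for any $g \in \L^2(\mu)$ the semigroup property, self-adjointness and Cauchy--Schwarz give
$$\|P_{(s+t)/2} g\|_2^2 \; = \; \langle g, P_{s+t} g\rangle \; = \; \langle P_s g, P_t g\rangle \; \leq \; \|P_s g\|_2 \, \|P_t g\|_2 \, ,$$
so that $V$ is midpoint log-convex on $[0,\infty)$, hence log-convex by strong continuity of the semigroup. Either $V \equiv 0$ or $V > 0$ everywhere (otherwise log-convexity plus continuity at $0$ would force $V(0) = 0$), and in the nontrivial case $\phi := \log V$ is a finite convex function on $[0,\infty)$.

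I would then fix $f \in \mathcal C$ with $g := f - \int f\,d\mu$, so that $V(t) = \Var_\mu(P_t f)$ and the hypothesis becomes $\phi(t) \leq \log c_f - 2\beta t$. Convexity of $\phi$ makes the secant slope $s(t) := (\phi(t) - \phi(0))/t$ non-decreasing in $t$, while the linear upper bound forces $\lim_{t\to\infty} s(t) \leq -2\beta$; the two together give $s(t) \leq -2\beta$ for \emph{every} $t > 0$, i.e.
$$\Var_\mu(P_t f) \; \leq \; e^{-2\beta t} \, \Var_\mu(f) \qquad \text{for all } f \in \mathcal C.$$

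It only remains to extend this bound to every $f \in \L^2(\mu)$ using density of $\mathcal C$ and the $\L^2$-contractivity of $P_t$ (both sides depend continuously on $f$), and then to recover the Poincar\'e inequality $\beta\,\Var_\mu(f) \leq \mathcal E(f,f)$ by differentiating the inequality at $t = 0$. I do not foresee any real obstacle: the only nonroutine ingredient is the log-convexity identity, and everything else is the elementary fact that a convex function dominated at infinity by a line of slope $-2\beta$ must have all its secant slopes from $0$ bounded above by $-2\beta$.
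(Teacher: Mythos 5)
Your proposal is correct and follows essentially the same route as the paper: both rest on the log-convexity of $t\mapsto \|P_t f\|_{\L^2(\mu)}^2$ combined with the elementary observation that a convex function dominated by a line of slope $-2\beta$ must have all secant slopes from the origin at most $-2\beta$ (the paper phrases this as: the convex function $\log\|P_tf\|_{\L^2(\mu)}+\beta t$ is bounded above on $\R^+$, hence non-increasing). The only difference is in how log-convexity is obtained --- you use the midpoint identity $\|P_{(s+t)/2}g\|_2^2=\langle P_s g,P_t g\rangle\leq\|P_sg\|_2\|P_tg\|_2$, while the paper differentiates twice and applies Cauchy--Schwarz --- but both exploit symmetry in the same essential way.
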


An immediate consequence of Lemma \ref{lemdecayf} is that (2) implies (1) in the statement of
Theorem \ref{thmmain}.

Indeed if (\textbf{H-p}) holds for $p\geq 2$, $$N_2(P_t f) \leq N_p(P_t f) \leq K_p \,
e^{-\lambda_p \, t} \, N_p(f)$$ and applying Lemma \ref{lemdecayf} with $\mathcal C = \L^p(\mu)$
we deduce that $C_P \leq 1/\lambda_p$.

If (\textbf{H-p}) holds for $1<p\leq 2$ we may similarly write $$N_2(P_t f) \leq
N^{(2-p)/2}_\infty(f) \, N^{p/2}_p(f) \leq (K_p)^{p/2} \, e^{- \, p \lambda_p \, t/2} \,
N^{(2-p)/2}_\infty(f)  \, N^{p/2}_p(f)$$ and applying Lemma \ref{lemdecayf} with $\mathcal C =
\L^\infty(\mu)$ we deduce that $C_P \leq 2/ (p \, \lambda_p)$.
\medskip

Of course if $p$ and $q$ are conjugate exponents, (\textbf{H-p}) and (\textbf{H-q}) are
equivalent. More precisely we may write, with $\mu(f):=\int f\, d\mu$
\begin{eqnarray*}
|\int P_t(f-\mu(f)) \, g \, d\mu| & = & |\int P_t(f-\mu(f)) \, (g - \mu(g)) \, d\mu| = |\int
(f-\mu(f)) \, P_t(g - \mu(g)) \, d\mu| \\ & \leq & N_p(f) \, N_q(P_t g) \leq  \, K_q \, e^{ -
\lambda_q \, t} \, N_p(f) \, N_q(g) \\ & \leq & \, 2 \, K_q \, e^{ - \lambda_q \, t} \, N_p(f) \,
\parallel g\parallel_{\L^q(\mu)}
\end{eqnarray*}
if (\textbf{H-q}) holds. Hence
\begin{lemma}\label{lempq}
If $\frac 1p + \frac 1q =1$, (\textbf{H-q}) implies (\textbf{H-p}) with $K_p \leq 2 \, K_q$ and
$\lambda_p \geq \lambda_q$.

Accordingly if (\textbf{H-p}) holds, (\textbf{H-Poinc}) holds with $C_P \leq 1/\lambda_p$.
\end{lemma}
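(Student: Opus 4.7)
The first assertion is essentially contained in the displayed calculation that immediately precedes the statement; my plan is just to finish by taking a supremum. The key identity, which follows from $\mu$-symmetry of $L$ (hence of $P_t$) together with $P_t$-invariance of $\mu$, is
$$\int P_t(f-\mu(f)) \, g \, d\mu \,=\, \int (f-\mu(f)) \, P_t(g-\mu(g)) \, d\mu.$$
Bounding the right-hand side by H\"older, then by (\textbf{H-q}), and finally using $N_q(g)\leq 2\|g\|_q$ (since $|\mu(g)|\leq \|g\|_q$ by Jensen) gives
$$\left|\int P_t(f-\mu(f))\,g\,d\mu\right| \,\leq\, 2\,K_q\,e^{-\lambda_q t}\,N_p(f)\,\|g\|_{\L^q(\mu)}.$$
Since $\mu(P_t f)=\mu(f)$, one has $N_p(P_tf)=\|P_t(f-\mu(f))\|_{\L^p(\mu)}$, and by $\L^p$--$\L^q$ duality this norm equals the supremum of the left-hand side over $\|g\|_{\L^q(\mu)}\leq 1$. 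This yields (\textbf{H-p}) with $\lambda_p=\lambda_q$ and $K_p\leq 2K_q$.

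For the second assertion I would split into cases on the value of $p$. If $p\geq 2$, Jensen's inequality gives $N_2\leq N_p$, so (\textbf{H-p}) implies $N_2(P_tf)\leq K_p\,e^{-\lambda_p t}\,N_p(f)$ on the dense subspace $\L^p(\mu)\subset \L^2(\mu)$; Lemma \ref{lemdecayf} applied with $\mathcal C=\L^p(\mu)$ then gives $C_P\leq 1/\lambda_p$. If $1<p<2$, the conjugate exponent satisfies $q>2$, and applying the first assertion with the roles of $p$ and $q$ exchanged (the duality argument is symmetric in the two exponents) shows that (\textbf{H-p}) implies (\textbf{H-q}) with $\lambda_q\geq \lambda_p$. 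The previous case then gives $C_P\leq 1/\lambda_q\leq 1/\lambda_p$.

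No genuine obstacle is present here: the proof is an entirely standard duality argument, almost all of which is already written out in the paragraph preceding the lemma. The only two subtleties are the constant $2$, arising from the centering of $g$, and the need to invoke the first assertion in the reverse direction in order to cover the range $1<p<2$ in the second assertion.
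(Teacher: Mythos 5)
Your proof is correct and follows essentially the same route as the paper: the displayed symmetry/duality computation preceding the lemma (with the factor $2$ coming from $N_q(g)\leq 2\|g\|_q$), followed by the $\L^p$--$\L^q$ duality to recover $N_p(P_tf)$, and for the second assertion the reduction to the exponent $\geq 2$ via the same duality so that Lemma \ref{lemdecayf} applies. No gaps.
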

If $p\leq 2$ we obtain a better bound that the one we obtained directly.
\medskip

Lemma \ref{lempq} also shows that, in order to complete the proof of Theorem \ref{thmmain}, it is
enough to show that (1) implies (3) for all $p\geq 2$.
\medskip

Actually there are two interests in such a Theorem. The first one is obviously the rate of
convergence at infinity for which what is important is to get the largest possible $\lambda_p$,
despite the (reasonable) value of $K_p$. The second one is the opposite: get the result with
$K_p=1$ so that the inequality becomes an equality at time $t=0$ in order to possibly use the
result for isoperimetric controls for instance. The ideal situation is when we can reach these
goals simultaneously (as for $p=2$). As we shall see however, for $p>2$ we will obtain two results
described below.

\begin{theorem}\label{thmpetit}
If (\textbf{H-Poinc}) is satisfied, then for all $p>2$ (\textbf{H-p})  holds with $K_p=1$ and
$$\lambda_p \geq \frac{2^{k+6}}{(2^{7\times 2^{k+1}} \, C_P)} \quad \textrm{ if } \quad 2^{k+1}
\geq p
> 2^k \, \textrm{ for some } \, k>1 \, .$$

Consequently for $1<p<2$, (\textbf{H-p}) holds with $K_p=2$ and $\lambda_p=\lambda_{p/(p-1)}$.
\end{theorem}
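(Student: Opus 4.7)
The plan is to reduce to dyadic exponents and induct. First, establish $\HP$ with $K_p = 1$ for $p = 2^{k+1}$ by induction on $k \geq 0$; then deduce the result for $p \in (2^k, 2^{k+1}]$ by H\"older interpolation between the adjacent dyadic exponents, which preserves $K_p = 1$ and uses the smaller of the two dyadic rates. By Lemma~\ref{lempq} this covers $1 < p < 2$ as well. The base case $k = 0$ ($p = 2$) is \textbf{H-2}.

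For the inductive step from $p/2$ to $p = 2^{k+1}$, set $g_t = P_t f - \mu(f)$ and $F(t) = \int g_t^p\,d\mu$ (with $p$ even). The diffusion chain rule and $\mu$-symmetry give
$$F'(t) = -p(p-1) \int g_t^{p-2}\,\Gamma(g_t, g_t)\,d\mu,$$
while Poincar\'e applied to the test function $g_t^{p/2}$ (whose carr\'e du champ equals $(p/2)^2 g_t^{p-2}\Gamma(g_t,g_t)$ by the chain rule) yields
$$F(t) - \parallel g_t\parallel_{p/2}^p \;\leq\; \frac{C_P\,p^2}{4}\int g_t^{p-2}\Gamma(g_t, g_t)\,d\mu,$$
hence the differential inequality
$$F'(t) \;\leq\; -\frac{4(p-1)}{p\,C_P}\bigl(F(t) - \parallel g_t\parallel_{p/2}^p\bigr).$$
Invoking the induction hypothesis at exponent $p/2$, combined with Jensen's inequality $\parallel g_0\parallel_{p/2} \leq \parallel g_0\parallel_p$, one obtains $\parallel g_t\parallel_{p/2}^p \leq e^{-p\lambda_{p/2} t} F(0)$, so that Gronwall's lemma produces $F(t) \leq K\,F(0)\,e^{-p\lambda_{p/2} t}$ for an explicit constant $K > 1$.

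The main obstacle is upgrading $K > 1$ to $K_p = 1$: such a bound is equivalent to a nonlinear $\L^p$-Poincar\'e inequality $\parallel g\parallel_p^p \leq C_p \int |g|^{p-2}\Gamma(g, g)\,d\mu$ for mean-zero smooth $g$, which does not follow from a single application of Poincar\'e to $|g|^{p/2}$ (the residual $\parallel g\parallel_{p/2}^p$ can be arbitrarily close to $\parallel g\parallel_p^p$ on nearly ``bimodal'' configurations). The remedy is to iterate Poincar\'e down the chain of test functions $|g_t|^{p/2}, |g_t|^{p/4}, \ldots, |g_t|^2$, close at the bottom with Poincar\'e on $g_t$ itself, and combine the successive inequalities by squaring via $(a+b)^2 \leq 2a^2 + 2b^2$. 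Each squaring roughly doubles the surviving constant, so that after $k$ iterations one obtains an explicit nonlinear $\L^p$-Poincar\'e inequality whose constant is of order $(2^{7 \cdot 2^{k+1}}/2^{k+6}) C_P$; plugged back into the ODE this directly yields $F'(t) \leq -p\lambda_p F(t)$, hence $F(t) \leq F(0) e^{-p\lambda_p t}$ and $\HP$ with $K_p = 1$ at the announced rate.
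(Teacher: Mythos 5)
You correctly reduce the statement with $K_p=1$ to the nonlinear $\L^p$-Poincar\'e inequality \eqref{eqpoingenebadintegre}, and you correctly identify the key obstacle: a single application of Poincar\'e to $|g|^{p/2}$ leaves the residual $\parallel g\parallel_{p/2}^p$, which is not negligible. But your proposed remedy --- iterate Poincar\'e down the chain $|g|^{p/2}, |g|^{p/4},\ldots$, close at the bottom with Poincar\'e applied to $g$ itself, and recombine by squaring --- does not close. Already at $p=4$ the chain has a single step: the residual is $\bigl(\int g^2\,d\mu\bigr)^2$, and Poincar\'e on $g$ only gives $\bigl(\int g^2\,d\mu\bigr)^2\le C_P^2\bigl(\int\Gamma(g,g)\,d\mu\bigr)^2$. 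The quantity $\bigl(\int\Gamma(g,g)\,d\mu\bigr)^2$ is \emph{not} dominated by $\int g^2\,\Gamma(g,g)\,d\mu$ (both are homogeneous of degree $4$, but for a mean-zero $g$ on $[0,1]$ equal to $\pm1$ off a window of width $2\ep$ around its sign change and linear inside it, the first is of order $\ep^{-2}$ while the second is of order $\ep^{-1}$). The same defect recurs at every stage of the chain: squaring produces $\bigl(\int|g|^{p/2-2}\Gamma(g,g)\,d\mu\bigr)^2$, which H\"older/Cauchy--Schwarz cannot convert into $\int|g|^{p-2}\Gamma(g,g)\,d\mu$ without introducing negative powers of $|g|$. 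The paper's actual mechanism for killing the residual is different and essential: introduce the $2$-Lipschitz truncation $\varphi_u$ vanishing on $\{|f|\le u\}$ and equal to $f$ on $\{|f|\ge 2u\}$; since $\mu(f)=0$ its mean is $O(u)$, Poincar\'e (resp.\ the induction hypothesis \eqref{eqpoingenebadintegre} at level $p$) applied to $\varphi_u(f)$ localizes the Dirichlet term on $\{|f|\ge u\}$, the Chebyshev bound $\BBone_{\{|f|\ge u\}}\le |f|^p/u^p$ restores the weight $|f|^{p-2}$ with a factor $u^{-p}$, and optimizing over $u$ recovers the correct homogeneity. Without this truncation-plus-optimization step your induction does not go through.

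Two secondary points. First, the passage from dyadic exponents to general $p\in(2^k,2^{k+1}]$ with $K_p=1$ cannot be done by H\"older (Lyapunov) interpolation of norms of a fixed function --- that would require bounding $N_{2^{k+1}}(f)$ by $N_p(f)$, which fails; one must use the Riesz--Thorin theorem applied to the operator $f\mapsto P_t(f-\mu(f))$, as the paper does. Second, the announced value of $\lambda_p$ is asserted at the end of your argument without computation; in the paper it comes out of tracking the explicit constants $4\,(2^{2p-1}+2^{4p-5})(2^p+2^{3p-1})(p-1)C(p)+p^2C_P$ through the induction $C(p)\mapsto C(2p)$.
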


Note that for $p=2$ we recover a worse constant that the known $\lambda_2=1/C_P$.
\medskip

We shall also prove

\begin{theorem}\label{thmgrand}
If (\textbf{H-Poinc}) is satisfied, then for all $p>2$ (\textbf{H-p})  holds with $\lambda_p=1/(p
\, C_P)$ and $K_p = 4^{1 - \frac 1p}$.

If $p=2^k$ for $k\geq 1$ one can improve these bounds in $\lambda_{2^k}=2/(2^k \, C_P)$ and $K_p =
4^{1- \frac 2p}$.

Consequently for $1<p<2$, (\textbf{H-p}) holds with $K_p=2$ and $\lambda_p=\lambda_{p/(p-1)}$.
\end{theorem}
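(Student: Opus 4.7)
The plan is to differentiate $\|P_t f-\mu(f)\|_p^p$, apply Poincar\'e to $|P_t f-\mu(f)|^{p/2}$ to obtain a driven linear ODE, close the loop by induction on $k$ for $p=2^k$, extend to general $p\geq 2$ by Riesz-Thorin interpolation, and reduce $1<p<2$ via Lemma \ref{lempq}. Write $u(t)=P_tf$; replacing $f$ by $f-\mu(f)$ we may assume $\mu(u(t))=0$ throughout. The master differential identity, obtained from $\mu$-symmetry, integration by parts, and the diffusion chain rule for $\Gamma$, is
$$\frac{d}{dt}\|u\|_p^p \;=\; -p(p-1)\int |u|^{p-2}\Gamma(u,u)\,d\mu \;=\; -\frac{4(p-1)}{p}\int\Gamma(|u|^{p/2},|u|^{p/2})\,d\mu.$$
After applying (\textbf{H-Poinc}) to $|u|^{p/2}$ and using the crude bound $(p-1)/p\geq 1/2$ (valid for $p\geq 2$), this becomes, with $y=\|u\|_p^p$ and $z=\|u\|_{p/2}^p$,
$$y'(t) \;\leq\; -\frac{2}{C_P}\bigl(y(t)-z(t)\bigr).$$
Keeping the coarser prefactor $2/C_P$ rather than the sharper $4(p-1)/(pC_P)$ is a deliberate simplification that aligns the integrating-factor rate with the decay rate of $z$ after one induction step, producing clean closed-form constants.

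The case $p=2^k$ is handled by induction on $k\geq 1$. The base case $k=1$ is just (\textbf{H-2}): $\lambda_2=1/C_P$, $K_2=1$. For the inductive step at $p=2^k$, the hypothesis at exponent $p/2=2^{k-1}$ together with $\|f\|_{p/2}\leq\|f\|_p$ (since $\mu$ is a probability) gives
$$z(t) \;\leq\; K_{p/2}^p\,e^{-p\lambda_{p/2}t}\|f\|_p^p \;=\; K_{p/2}^p\,e^{-4t/C_P}\|f\|_p^p,$$
using $p\lambda_{p/2}=2^k\cdot 2/(2^{k-1}C_P)=4/C_P$. Multiplying the ODE by the integrating factor $e^{2t/C_P}$ and integrating over $[0,t]$ then yields
$$e^{2t/C_P}y(t) \;\leq\; \|f\|_p^p + K_{p/2}^p\,\|f\|_p^p(1-e^{-2t/C_P}) \;\leq\; \bigl(1+K_{p/2}^p\bigr)\|f\|_p^p.$$
Since $K_{p/2}^p=(K_{p/2}^{p/2})^2=4^{p-4}$, the elementary inequality $1+4^{p-4}\leq 4^{p-2}$ (valid for $p\geq 4$) closes the induction with $K_{2^k}=4^{1-2/2^k}$ and $\lambda_{2^k}=2/(2^kC_P)$, which are exactly the improved constants claimed for powers of two.

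For general $p>2$, fix $k$ with $2^k\leq p\leq 2^{k+1}$ and apply Riesz-Thorin complex interpolation to the linear operator $T_tf:=P_tf-\mu(f)$ between the two endpoints $L^{2^k}(\mu)$ and $L^{2^{k+1}}(\mu)$ with the bounds established above. With $\theta\in[0,1]$ defined by $1/p=(1-\theta)/2^k+\theta/2^{k+1}$, both the rate and the prefactor interpolate cleanly to give $\|T_tf\|_p\leq 4^{1-2/p}\,e^{-2t/(pC_P)}\|f\|_p$, which is strictly stronger than the claimed $\lambda_p=1/(pC_P)$, $K_p=4^{1-1/p}$, and in particular implies it. Finally, for $1<p<2$, invoking Lemma \ref{lempq} with the conjugate exponent $q=p/(p-1)\in(2,\infty)$ transfers the bound for $q$ to the claimed bound for $p$, with $\lambda_p\geq\lambda_q$ and $K_p\leq 2K_q$.

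The main technical obstacle is the inductive arithmetic in the power-of-two step: the crude bound $(p-1)/p\geq 1/2$ must be used in \emph{exactly} the right place (before, not after, integrating the ODE), so that the rate of $z$ is $4/C_P$, twice the integrating-factor rate, and the integrating-factor formula then produces the telescoping constant $1+K_{p/2}^p$ that fits the elementary recursion $K_p^p=16\,K_{p/2}^p \geq 1+K_{p/2}^p$. The remaining steps (Riesz-Thorin interpolation and conjugate-exponent duality via Lemma \ref{lempq}) are then essentially formal.
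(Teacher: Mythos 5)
Your proof is correct and follows essentially the same route as the paper: differentiate $N_p^p(P_tf)$, apply Poincar\'e to $|P_tf|^{p/2}$, run a Gronwall/integrating-factor induction over $p=2^k$, then Riesz--Thorin and Lemma \ref{lempq}. The only (harmless) deviations are that you lower-bound $4(p-1)/p$ by $2$ rather than the paper's $3$, which lets the integrating-factor rate coincide with the target rate $2/C_P$ instead of trading $e^{-3t/C_P}$ for $e^{-2t/C_P}$ at the end, and that you compute the Riesz--Thorin constant as the geometric mean of the endpoint bounds (giving the slightly stronger $4^{1-2/p}e^{-2t/(pC_P)}$) where the paper merely dominates by the maximum.
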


Again we are loosing some factor (but here only $2$) for $p>2$ but close to $2$. Of course the
statements of both Theorem \ref{thmpetit} and Theorem \ref{thmgrand} indicate that the scheme of
proof will be to get the result for the successive powers of $2$ and then to interpolate between
them.
\medskip

The case $p=1$ is extensively studied in \cite{CatGui3} and the Poincar\'e inequality is no more
sufficient in general to obtain an exponential decay in $\L^1(\mu)$. Replacing $\L^p$ norms by
Orlicz norms (weaker than any $N_p$ for $p>1$) is possible provided one reinforces the Poincar\'e
inequality into a $F$-Sobolev inequality (see \cite{CatGui3} Theorem 3.1) as it is well known in
the case $F=\log$ for the Orlicz space $\L \, \log \, \L$.
\medskip

The question of exponential convergence in $\L^p$ ($p\neq 2$) was asked to us by M. Ledoux after a
conversation with A. Naor.
We did not find the statement of such a result in the literature. However recall that in
\cite{w04jfa}, F.Y. Wang used the equivalent Beckner type formulation of Poincar\'e inequality to
give a partial answer to the problem i.e., a Poincar\'e inequality with constant $C_P$ is
equivalent to the following: for any $1< p \leq 2$ and for any \underline{non-negative} $f$,
$$\int \, (P_tf)^p \, d\mu - \left(\int f\,d\mu\right)^p \leq e^{- \, \frac{4(p-1) \, t}{p \, C_P}} \, \left(\int \,
(f)^p \, d\mu - \left(\int f\,d\mu\right)^p \right) \, .$$ (One has to take care with the constants since some $2$
may or may not appear in the definition of $\Gamma$, depending on authors and of papers by the
same authors.) This result cannot be used to study the decay to the mean in $\L^p$ norm, but it is
of particular interest when studying densities of probability.

Note that the decay rate we obtain in Theorem \ref{thmgrand} is not comparable with the one in
Wang's result. Nevertheless, we recover here the $\L^1$ decay obtained in \cite{CatGui3} Example
2.3. so that, at least for powers of $2$, the rate obtained in Theorem \ref{thmgrand} seems to be
almost optimal.

\subsection*{Acknowledgement}
We warmly thank M. Ledoux for asking us about the exponential convergence in $\L^p$ ($p\neq 2$),
but also especially because he preciously saved a copy of one of our main arguments that we loosed
in our perfectly disordered office. The third author also would like to warmly thank Fabio
Martinelli and the University of Rome 3 where part of this work was done.

\bigskip

\section{Poincar\'e inequalities and $\L^p$ spaces.}\label{seclp}

We start with the \textbf{Proof of Lemma \ref{lemdecayf}. \quad}
\medskip

\begin{proof}
The proof lies on the following lemma proven in \cite{rw} using the spectral resolution
\begin{lemma}\label{lemlogsemi}
$t \mapsto \log \parallel P_t f\parallel_{\L^2(\mu)}$ is convex.
\end{lemma}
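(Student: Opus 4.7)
The plan is to use the spectral theorem for the self-adjoint generator $L$ on $L^2(\mu)$ to reduce the convexity statement to an elementary Hölder inequality applied to the Laplace transform of a positive measure.

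First, since $L$ is $\mu$-symmetric and $-L$ is positive (because $-\int fLf\,d\mu = \int \Gamma(f,f)\,d\mu \geq 0$), the spectral theorem provides a projection-valued resolution $(E_\lambda)_{\lambda \geq 0}$ of $-L$, which yields
$$P_t \, = \, \int_{[0,\infty)} \, e^{-\lambda t} \, dE_\lambda \, .$$
For any $f \in L^2(\mu)$, associate to $f$ the positive finite spectral measure $\nu_f$ on $[0,\infty)$ defined by $d\nu_f(\lambda) := d\langle E_\lambda f,f\rangle$. Then
$$\varphi(t) \, := \, \parallel P_t f\parallel_{\L^2(\mu)}^2 \, = \, \int_{[0,\infty)} \, e^{-2\lambda t} \, d\nu_f(\lambda) \, .$$

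Next, I would observe that a Laplace transform of a positive measure is automatically log-convex. Indeed, for $\theta \in [0,1]$ and $s,t \geq 0$, write $e^{-2\lambda(\theta s + (1-\theta)t)} = (e^{-2\lambda s})^\theta (e^{-2\lambda t})^{1-\theta}$ and apply Hölder's inequality with conjugate exponents $1/\theta$ and $1/(1-\theta)$ in the measure $\nu_f$:
$$\varphi(\theta s + (1-\theta) t) \, \leq \, \left(\int e^{-2\lambda s}\,d\nu_f\right)^\theta \, \left(\int e^{-2\lambda t}\,d\nu_f\right)^{1-\theta} \, = \, \varphi(s)^\theta \, \varphi(t)^{1-\theta} \, .$$
Taking $\log$ of both sides shows that $\log \varphi$ is convex, hence so is $\log \parallel P_t f\parallel_{\L^2(\mu)} = \tfrac{1}{2}\log \varphi(t)$.

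As an alternative (and perhaps more transparent) route, one could differentiate twice to compute
$$\varphi'(t) = -2\int \lambda e^{-2\lambda t}\,d\nu_f, \qquad \varphi''(t) = 4\int \lambda^2 e^{-2\lambda t}\,d\nu_f,$$
and then invoke Cauchy-Schwarz, $\bigl(\int \lambda e^{-2\lambda t}d\nu_f\bigr)^2 \leq \int \lambda^2 e^{-2\lambda t}d\nu_f \cdot \int e^{-2\lambda t}d\nu_f$, to conclude $(\log\varphi)'' = (\varphi''\varphi - (\varphi')^2)/\varphi^2 \geq 0$. I would choose the Hölder approach in the write-up because it avoids any discussion of differentiability of $\varphi$ (the integrand being integrable near the endpoints requires a mild justification that Hölder sidesteps).

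There is essentially no obstacle beyond setting up the spectral framework correctly. The only minor subtlety is to make sure the measure $\nu_f$ is finite and supported in $[0,\infty)$, which is immediate from $-L \geq 0$ and $\nu_f([0,\infty)) = \parallel f\parallel_{\L^2(\mu)}^2 < \infty$. One may also note that if $f$ has nonzero mean then $\nu_f$ has a point mass at $\lambda=0$ of size $(\int f\,d\mu)^2$; this causes no issue since $\nu_f$ is still a positive finite measure, and the Hölder argument above uses nothing more.
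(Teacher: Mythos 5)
Your proof is correct, but it takes a different route from the paper. You invoke the spectral theorem for the self-adjoint generator and obtain log-convexity of $t\mapsto \|P_tf\|^2_{\L^2(\mu)}$ as an instance of the general fact that the Laplace transform of a positive finite measure is log-convex (via H\"older); this is essentially the argument of R\"ockner--Wang that the paper cites and deliberately bypasses. The paper instead gives a direct computation: writing $n(t)=\|P_tf\|^2_{\L^2(\mu)}$, it computes $n'(t)=2\int P_tf\,LP_tf\,d\mu$ and, using symmetry, $n''(t)=4\int (LP_tf)^2\,d\mu$, so that $n''n-(n')^2\ge 0$ is exactly the Cauchy--Schwarz inequality applied to $P_tf$ and $LP_tf$; this avoids the spectral resolution altogether. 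What your approach buys is robustness at the level of regularity: the H\"older argument needs no differentiation of $n$ in $t$, whereas the paper's computation implicitly works with $f$ in a suitable domain and smooths by the semigroup. What the paper's approach buys is economy (no spectral machinery) and a transparent localization of where symmetry enters --- a point the authors care about, since they remark later that symmetry is used only in this lemma; note that your argument uses symmetry just as essentially, through the existence of the spectral resolution of $L$ on $\L^2(\mu)$, so neither proof extends the lemma to the non-symmetric invariant case. Your closing observation about the point mass at $\lambda=0$ is a nice touch and causes no difficulty, as you say.
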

Here is a direct proof that does not use the spectral resolution. If $n(t)=\parallel P_t
f\parallel^2_{\L^2(\mu)}$, the sign of the second derivative of $\log n$ is the one of $n'' n -
(n')^2$. But $$n'(t)= 2 \, \int \, P_tf \, LP_t f \, d\mu$$ and
$$n''(t) = 2 \, \int \, (LP_tf)^2 \, d\mu + 2 \, \int \, P_tf \, LP_t Lf \, d\mu = 4 \, \int \,
(LP_tf)^2 \, d\mu \, ,$$ so that lemma \ref{lemlogsemi} is just a consequence of Cauchy-Schwarz
inequality.

In order to prove lemma \ref{lemdecayf}, assuming that $\int f d\mu=0$ which is not a restriction,
it is enough to look at
$$t \mapsto \log
\parallel P_t f\parallel_{\L^2(\mu)} + \beta \, t \, ,$$ which is convex, according to lemma
\ref{lemlogsemi}, and bounded since $\Var_\mu (P_t f) \leq c_f \, e^{- \, 2 \beta t}$. But a
bounded convex function on $\R^+$ is necessarily non-increasing. Hence $$\parallel P_t
f\parallel_{\L^2(\mu)} \leq e^{-\beta \, t} \, \parallel P_0 f\parallel_{\L^2(\mu)}$$ for all
$f\in \mathcal C$, the result follows using the density of $\mathcal C$.
\end{proof}
\bigskip

We come now to the proofs of our main theorems.

\textbf{Proof of Theorem \ref{thmpetit}.}
\medskip

\begin{proof}
The natural idea to study the time derivative of $N_p(P_tf)$, namely
$$\frac{d}{dt} N^p_p(P_tf) = p \, \int \, sign(P_t f - \mu(f)) \, |P_tf -\mu(f)|^{p-1} \, LP_tf \,
d\mu \, .$$ Hence we get an equivalence between
\begin{enumerate}
\item[] \quad There exists a constant $C(p)$ such that for all $f$,
\begin{equation}\label{eqconvp}
N_p^p(P_tf) \leq e^{- \, \frac{pt}{C(p)}} \, N_p^p(f) \, .
\end{equation}
 \item[] \quad There exists a constant $C(p)$
such that for all $f \in \mathcal D$ with $\mu(f)=0$,
\begin{equation}\label{eqpoingenebad}
N_p^p(f) \, \leq \, - \, C(p) \, \int \, sign(f) \, |f|^{p-1} \, Lf \, d\mu \, .
\end{equation}
\end{enumerate}
In order to compare all the inequalities \eqref{eqpoingenebad} to the Poincar\'e inequality (i.e.
$p=2$) one is tempted to make the change of function $f \mapsto sign(f) \, |f|^{2/p}$ (or $f
\mapsto sign(f) \, |f|^{p/2}$) and to use the chain rule. Unfortunately, first
$\varphi(u)=u^{2/p}$ is not $C^2$, second $\mu(sign(f) \, |f|^{2/p})\neq 0$ (the same for $p/2$
for the second argument).

However, for $p\geq 2$,  one can integrate by parts in \eqref{eqpoingenebad} which thus becomes
\begin{equation}\label{eqpoingenebadintegre}
N_p^p(f) \, \leq \, C(p) \, (p-1) \,\int \,  |f|^{p-2} \, \Gamma(f,f) \, d\mu \, = \, C(p) \,
\frac{4(p-1)}{p^2} \, \int \, \Gamma(|f|^{p/2},|f|^{p/2}) \, d\mu.
\end{equation}
\medskip

It thus remains to show that the Poincar\'e inequality implies \eqref{eqpoingenebadintegre} for
all $p\geq 2$. This will be done in two steps. First we will show the result for $p=4$. Hence
\eqref{eqconvp} hold for $p=2$ and $p=4$. According to the Riesz-Thorin interpolation theorem,
\eqref{eqconvp} (hence \eqref{eqpoingenebadintegre}) thus hold for all $2\leq p \leq 4$. Next we
shall show that if \eqref{eqpoingenebadintegre} holds for $p$ it holds for $2p$. This will
complete the proof by an induction argument. Of course the final step is the only necessary one
(starting with $p=2$) but we think that the details for $2p=4$ will help to follow the scheme of
proof for the general $2p$ case.
\smallskip

We proceed with the proof for $p=4$.

Assume that $\mu(f)=0$. First, applying the Poincar\'e inequality to $f^2$ we get $$\int f^4 d\mu
\leq \left(\int f^2 d\mu\right)^2 + 4 \, C_P \, \int f^2 \, \Gamma(f,f) \, d\mu \, ,$$ so that it
remains to prove that $$\left(\int f^2 d\mu\right)^2 \leq C \, \int f^2 \, \Gamma(f,f) \, d\mu \,
,$$ for some constant $C$.

Let now, for every $u>0$, $\varphi=\varphi_u : \R \mapsto \R$ be the $2$-Lipschitz function
defined by $\varphi(s)=0$ if $|s|\leq u$,  $\varphi(s)=s$ if $|s|\geq 2u$ and linear in between.
Applying Poincar\'e inequality to $\varphi(f)$ yields $$\int \, (\varphi(f))^2 \, d\mu \leq
\left(\int \, \varphi(f) \, d\mu\right)^2 + 4 \, C_P \, \int_{\{|f|\geq u\}} \, \Gamma(f,f) \,
d\mu \, .$$ But $$\int \,  (\varphi(f))^2 \, d\mu \geq \int_{\{|f|\geq 2u\}} \, f^2 \, d\mu \geq
\int f^2 d\mu \, - \, 4u^2 \, ,$$ and since $\mu(f)=0$, $$\left| \int \varphi(f) \, d\mu\right|
\leq 4u \, .$$ Summarizing, it follows that
\begin{eqnarray*}
\int f^2 \, d\mu & \leq & 20 u^2 + 4 \, C_P \, \int_{\{|f|\geq u\}} \, \Gamma(f,f) \, d\mu \\ &
\leq & 20 u^2 + \frac{4}{u^2} \, C_P \, \int \, f^2 \, \Gamma(f,f) \, d\mu \, .
\end{eqnarray*}
Optimizing in $u^2$ finally yields $$\left(\int f^2 \, d\mu\right)^2 \leq 320 \, C_P \, \int \,
f^2 \, \Gamma(f,f) \, d\mu \, ,$$ i.e. $$N_4^4(f) \, \leq \, 324 \, C_P \, \int \, f^2 \,
\Gamma(f,f) \, d\mu \, .$$ The constant $324$ is of course not optimal, but replacing the $2$ by
$2a$ in the definition of $\varphi$ yields of course the same constant.
\smallskip

Now assume that \eqref{eqpoingenebadintegre} holds for some $p\geq 2$ and of course the Poincar\'e
inequality holds with constant $C_P$. First we apply Poincar\'e inequality to the function
$|f|^p$,
$$\int |f|^{2p} d\mu \leq  \left(\int |f|^p \, d\mu\right)^2 + C_P \, p^2 \, \int \, |f|^{2p-2} \,
\Gamma(f,f) \, d\mu \, .$$

Now as in the previous step we introduce $\varphi$ and remark that $$\int |f|^p d\mu \leq \int \,
|\varphi(f)|^p \, d\mu + 2^p \, u^p \, .$$  We write \eqref{eqpoingenebadintegre} for the function
$\varphi(f) -\mu(\varphi(f))$ and then apply  $|a+b|^{q} \leq 2^{q-1} \, (|a|^q + |b|^q)$ for
$q\geq 1$ and $|a+b|^{q} \leq 2^{q} \, (|a|^q + |b|^q)$ if $q\geq 0$, and recalling that
$|\mu(\varphi(f))|\leq 4u$ in order to obtain
\begin{eqnarray*}
\int |\varphi(f)|^{p} d\mu &\leq& 2^{p-1} \, \left((p-1) \, C(p) \, \int \, |\varphi(f)
-\mu(\varphi(f))|^{p-2} \, \Gamma(\varphi(f),\varphi(f)) \,  d\mu + |\mu(\varphi(f))|^p\right) \\
&\leq& 2^{p-1} \, (p-1) \, C(p) \, 4 \, \int_{\{|f|\geq u\}} \,
2^{p-2}\left(|f|^{p-2}+|\mu(\varphi(f))|^{p-2}\right) \, \Gamma(f,f) \, d\mu \\ & &  + 2^{p-1}
|\mu(\varphi(f))|^p \\ & \leq & 2^{2p-1} \, (p-1) \, C(p) \,  \int_{\{|f|\geq u\}} \, |f|^{p-2} \,
\frac{|f|^p}{u^p} \, \Gamma(f,f) \, d\mu \\ & & + 2^{4p-5} \, (p-1) \, C(p) \, u^{p-2} \,
\int_{\{|f|\geq u\}} \, \frac{|f|^{2p-2}}{u^{2p-2}} \, \Gamma(f,f) \, d\mu \\ & & + 2^{3p-1} \,
u^{p}\\ &\leq& 2^{3p-1} \, u^{p} + (2^{2p-1}+2^{4p-5}) \, \frac{C(p) \, (p-1)}{u^p} \left(\int \,
|f|^{2p-2} \, \Gamma(f,f) \, d\mu\right) \, .
\end{eqnarray*}
Again we optimize in $u^p$ and obtain $$\left(\int |f|^{p} d\mu\right)^2 \leq 4 \,
(2^{2p-1}+2^{4p-5})(2^p+2^{3p-1}) \, (p-1) \, C(p) \, \left(\int \, |f|^{2p-2} \, \Gamma(f,f) \,
d\mu\right) \, ,$$ and finally $$\int |f|^{2p} d\mu \leq \left(4 \,
(2^{2p-1}+2^{4p-5})(2^p+2^{3p-1}) \, (p-1) \, C(p) + p^2 \, C_P\right) \, \int \, |f|^{2p-2} \,
\Gamma(f,f) \, d\mu \, ,$$ and the proof is completed.
\end{proof}
\smallskip

Of course the final step is available for $p=2$ and $C(2)=C_P$ but it furnishes a still worse
constant than $324 C_P$. The value of $\lambda_p$ for $p=2^k$ can be obtained by induction.
\bigskip

\textbf{Proof of Theorem \ref{thmgrand}}
\medskip

\begin{proof}
We shall prove by induction that, provided (\textbf{H-Poinc}) holds, the following holds true for
all $k\geq 1$: if $p=2^k$, for all $t\geq 0$
\begin{equation}\label{eqdecayciril}
N_p^p(P_tf) \leq 4^{p-2} \, e^{ - 2t/C_P} \, N_p^p(f) \, .
\end{equation}
For $k=1$ (i.e $p=2$) \eqref{eqdecayciril} is equivalent to (\textbf{H-Poinc}).

Now we proceed by induction. Without loss of generality we assume that $\int f \, d\mu = 0$ and
denote by $U_k(t) := N_p^p(P_t f)$ for $p=2^k$. Recall that
\begin{eqnarray*}
U'_k(t) &=& 2^k \, \int \, sign(P_tf) \, |P_tf|^{p-1} \, LP_t f \, d\mu \\ &=&  - 2^k \, (2^k-1)
\, \int \, (P_t f)^{2^k-2} \, \Gamma(P_tf,P_tf) \, d\mu \\ &=& - \, 4 \, (2^k -1) \, 2^{-k} \,
\int \, \Gamma((P_tf)^{2^{k-1}},(P_tf)^{2^{k-1}}) \, d\mu \\ &\leq& - \, 3 \, \int \,
\Gamma((P_tf)^{2^{k-1}},(P_tf)^{2^{k-1}}) \, d\mu \, ,
\end{eqnarray*}
since for $k\geq 1$, $3\leq 4 \, (2^k -1) \, 2^{-k}$. In addition the Poincar\'e inequality
applied to $(P_tf)^{2^{k-1}}$ yields $$U_k(t) \leq U^2_{k-1}(t) + C_P \, \int \,
\Gamma((P_tf)^{2^{k-1}},(P_tf)^{2^{k-1}}) \, d\mu \, .$$ Putting these inequalities together we
thus have
\begin{equation}\label{eqrecure}
U'_k(t) \leq - \frac{3}{C_P} \, U_k(t) + \frac{3}{C_P} \, U^2_{k-1}(t) \, .
\end{equation}
We may thus apply Gronwall's lemma and obtain $$U_k(t) \leq e^{- \, 3t/C_P} \, \left(U_k(0) +
\frac{3}{C_P} \, \int_0^t \, e^{3s/C_P} \, U^2_{k-1}(s) \, ds\right) \, .$$ If
\eqref{eqdecayciril} holds for $p=2^{k-1}$ with $k-1 \geq 1$, we thus obtain
\begin{eqnarray*}
U_k(t) &\leq& e^{- \, 3t/C_P} \, \left(U_k(0) + \frac{3}{C_P} \, \int_0^t \, e^{3s/C_P} \,
\left(4^{2^{k-1}-2} e^{- \, 2s/C_P} \,  U_{k-1}(0)\right)^2 \, ds\right) \,\\ &\leq& e^{- \,
3t/C_P} \, \left(U_k(0) + 3 \, 4^{2^{k}-4} \, U^2_{k-1}(0) \, \int_0^t \, \frac{1}{C_P} \, e^{- \,
s/C_P} \, ds\right) \\ &\leq& e^{- \, 3t/C_P} \, \left(U_k(0) + 3 \, 4^{2^{k}-4} \, U^2_{k-1}(0)
\right)\\ &\leq& e^{- \, 2t/C_P} \, U_k(0) \, \left(1 + 3 \, 4^{2^{k}-4}\right) \, ,
\end{eqnarray*}
since $U_{k-1}^2(0) \leq U_k(0)$ thanks to Cauchy-Schwarz inequality.

Finally remark that $4^{2^k -2} \geq 1 + 3 \times 4^{2^k - 4}$ for $k\geq 2$, so that the
induction is completed.

Hence \eqref{eqdecayciril} is true for all $p=2^k$. In order to apply again the Riesz-Thorin
interpolation theorem for $2^k < p \leq 2^{k+1}$ and complete the proof of the theorem it remains
to note that $$4^{1- \frac 1p} \, e^{ - t/p\,C_P} \geq \max \left(4^{\frac{2^k -2}{2^k}} \, e^{-
2t/2^k \, C_P} \, ; \, 4^{\frac{2^{k+1} -2}{2^{k+1}}} \, e^{- 2t/2^{k+1} \, C_P}\right) \, .$$
\end{proof}
\bigskip

\section{Another proof of Theorem \ref{thmmain}.}

Let us start with a remark
\begin{remark}
Using H\"{o}lder inequality we see that \eqref{eqpoingenebadintegre} implies that for
$$\kappa(p)=(C(p) \, (p-1))^{p/2} \, ,$$
\begin{equation}\label{eqpoincp}
N_p^p(f) \, \leq \, \kappa(p) \,\int \,  \Gamma^{p/2}(f,f) \, d\mu \, .
\end{equation}
The latter is a $\L^p$ Poincar\'e inequality which was used in \cite{DGGW} and particularly
studied in \cite{Emil1}.

As recalled by E. Milman, we can replace the mean $\mu(f)=\int f\, d\mu$ by a median $m_\mu(f)$ in \eqref{eqpoincp}. Indeed
according to Lemma 2.1 in \cite{Emil1}, for all $1\leq p <+\infty$
\begin{equation}\label{eqcompmilman}
\frac 12 \, N_p(f) \leq \parallel f - m_\mu(f)\parallel_p \leq 3 \, N_p(f) \, .
\end{equation}
Hence up to the constants we may replace $\mu(f)=0$ by $m_\mu(f)=0$ in \eqref{eqpoincp}. Now the
transformations $f \mapsto sign(f) \, |f|^{h}$ with $h=2/p$ or $h=p/2$ is preserving the fact that
$0$ is a median so that we easily obtain (see \cite{Emil1} Proposition 2.5)
\begin{proposition}\label{propcomppoincp}
If $\mu$ satisfies \eqref{eqpoincp} for some $p_0\geq 1$ with a constant $\kappa(p_0)$, then it
satisfies \eqref{eqpoincp} for all $p\geq p_0$, with a constant $$\kappa(p) \leq \left(\frac{6p}
{p_0}\right)^p \, \kappa^{p/p_0}(p_0).$$
\end{proposition}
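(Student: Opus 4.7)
The plan is to follow the recipe sketched just above the statement: first transfer \eqref{eqpoincp} from mean-centered to median-centered form using \eqref{eqcompmilman}, then apply it at the exponent $p_0$ to the test function $g := \mathrm{sign}(f)\,|f|^{h}$ with $h := p/p_0 \geq 1$. Since the map $u \mapsto \mathrm{sign}(u)|u|^{h}$ is odd and non-decreasing, $g$ still has $0$ as a median whenever $f$ does, so this change of variable preserves the normalization and one only has to compute what the hypothesis gives once evaluated at $g$.

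More precisely, assuming $m_\mu(f) = 0$, the hypothesis \eqref{eqpoincp} at $p_0$ together with \eqref{eqcompmilman} yields
$$\int |f|^{p_0}\,d\mu \; \leq \; 3^{p_0}\,\kappa(p_0)\,\int \Gamma(f,f)^{p_0/2}\,d\mu.$$
Applied to $g$, using $|g|^{p_0} = |f|^{p}$ and the derivation rule $\Gamma(g,g) = h^{2}\,|f|^{2(h-1)}\,\Gamma(f,f)$, this produces
$$\int |f|^{p}\,d\mu \; \leq \; 3^{p_0}\,\kappa(p_0)\,h^{p_0}\,\int |f|^{p-p_0}\,\Gamma(f,f)^{p_0/2}\,d\mu.$$
Hölder's inequality with conjugate exponents $p/(p-p_0)$ and $p/p_0$ bounds the right-hand integral by $\|f\|_{p}^{p-p_0}\,\bigl(\int \Gamma(f,f)^{p/2}\,d\mu\bigr)^{p_0/p}$. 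Dividing by $\|f\|_{p}^{p-p_0}$ and raising the resulting inequality to the power $p/p_0$ then gives the median form of \eqref{eqpoincp} at $p$ with constant $3^{p}\,\kappa(p_0)^{p/p_0}\,h^{p}$. One last invocation of \eqref{eqcompmilman} (factor $2^{p}$) converts back to the mean-centered form and yields precisely $\kappa(p) \leq (6p/p_0)^{p}\,\kappa(p_0)^{p/p_0}$.

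The only step that is not mere bookkeeping of Hölder exponents and numerical constants is the chain-rule computation for $\Gamma(g,g)$, since $\varphi(u) = \mathrm{sign}(u)|u|^{h}$ fails to be $C^{2}$ at $0$ when $h < 2$. This is standard in the carré du champ formalism and is handled by approximating $\varphi$ with smooth functions that agree with it outside a shrinking neighborhood of $0$ and then passing to the limit; I would simply remark on this rather than spelling it out. Apart from that technical point, the proof is essentially two lines of Hölder, and the factor $6 = 2 \times 3$ in the final constant transparently reflects the two successive applications of \eqref{eqcompmilman}.
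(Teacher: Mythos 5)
Your proof is correct and follows exactly the route the paper sketches (and attributes to Milman's Proposition 2.5): pass to the median-centered form via \eqref{eqcompmilman}, apply the $p_0$-inequality to $g=\mathrm{sign}(f)|f|^{p/p_0}$ using that this odd increasing change of variable preserves the median and the chain rule for $\Gamma$, then conclude by H\"older; your bookkeeping recovers the stated constant $(6p/p_0)^p\,\kappa^{p/p_0}(p_0)$ precisely. The paper gives no more detail than this, so your write-up is, if anything, more complete than the original.
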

\hfill $\diamondsuit$
\end{remark}

Unfortunately the same reasoning fails with \eqref{eqpoingenebadintegre} since there is no obvious
comparison between $ \int \, |f-\mu(f)|^{p-2} \, \Gamma(f,f) \, d\mu$ and $ \int \,
|f-m_\mu(f)|^{p-2} \, \Gamma(f,f) \, d\mu$.

However we shall see that one can nevertheless use the median in order to prove Theorem
\ref{thmmain}, but that doing so furnishes disastrous constants.
\medskip

Introduce some new notation. If $f\in \L^p$, denote by $M_p^p(f)=\int |f -m_\mu(f)|^p \, d\mu$ and
the new inequality
\begin{equation}\label{eqlpmed}
M_p^p(f) \leq B(p) \, \int \, |f-m_\mu(f)|^{p-2} \, \Gamma(f,f) \, d\mu \, .
\end{equation}
we then have

\begin{theorem}\label{thmsurprisemean}
All the inequalities \eqref{eqlpmed} are equivalent (for $+\infty>p\geq 2$ of course). Furthermore
the best constants $B(p)$ satisfy $B(p) = \frac{p^2}{4} \, B(2)$.
\end{theorem}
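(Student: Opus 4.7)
The whole point of using the median rather than the mean is that the nonlinear transformation $\phi_h(s) := \mathrm{sign}(s)\,|s|^h$ maps functions with median $0$ to functions with median $0$ for every $h>0$. My strategy is therefore to prove the two inequalities $B(p) \le \tfrac{p^2}{4}B(2)$ and $B(2) \le \tfrac{4}{p^2}B(p)$ separately, each by applying the corresponding hypothesis \eqref{eqlpmed} to a suitable $\phi_h(f)$ and invoking the chain rule $\Gamma(\phi(f),\phi(f)) = (\phi'(f))^2\,\Gamma(f,f)$ afforded by the diffusion assumption.

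For the bound $B(p) \le \tfrac{p^2}{4} B(2)$: assume \eqref{eqlpmed} holds for $p=2$ and let $f$ be smooth with $m_\mu(f) = 0$ (which we may always arrange by translation). Set $g := \phi_{p/2}(f) = \mathrm{sign}(f)\,|f|^{p/2}$, so that $m_\mu(g)=0$ as well. Then on the one hand $\int g^2\,d\mu = \int |f|^p\,d\mu = M_p^p(f)$, and on the other hand the chain rule gives, at least formally, $\Gamma(g,g) = \tfrac{p^2}{4}\,|f|^{p-2}\,\Gamma(f,f)$. Inserting these into \eqref{eqlpmed} for $p=2$ applied to $g$ yields exactly
\[
M_p^p(f) \;\le\; \frac{p^2}{4}\,B(2)\,\int |f|^{p-2}\,\Gamma(f,f)\,d\mu,
\]
which, since $|f| = |f - m_\mu(f)|$, is \eqref{eqlpmed} at exponent $p$ with constant $\tfrac{p^2}{4}B(2)$.

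For the reverse bound $B(2) \le \tfrac{4}{p^2}B(p)$: assume \eqref{eqlpmed} holds at exponent $p$, take $g$ smooth with $m_\mu(g)=0$, and set $f := \phi_{2/p}(g) = \mathrm{sign}(g)\,|g|^{2/p}$. Again $m_\mu(f)=0$, and a direct computation of the exponents shows $|f|^{p-2}\,\Gamma(f,f) = \tfrac{4}{p^2}\,\Gamma(g,g)$ (the powers of $|g|$ cancel exactly, which is the algebraic reason the constants are sharp). Thus \eqref{eqlpmed} at exponent $p$ applied to $f$ reads
\[
\int g^2\,d\mu \;=\; \int |f|^p\,d\mu \;\le\; \frac{4}{p^2}\,B(p)\,\int \Gamma(g,g)\,d\mu,
\]
which is precisely \eqref{eqlpmed} at $p=2$ with constant $\tfrac{4}{p^2}B(p)$. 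Combining the two one-sided bounds yields the equality $B(p) = \tfrac{p^2}{4}B(2)$ for the best constants.

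The only genuine obstacle is that $\phi_h$ is not $C^2$ at the origin for non-integer $h$, so the chain rule for $\Gamma$ has to be justified by a standard regularization: replace $\phi_h$ by a smooth approximant such as $s\mapsto s(s^2+\varepsilon)^{(h-1)/2}$, apply the $C^2$ chain rule valid on the diffusion, and pass to the limit $\varepsilon\downarrow 0$ using dominated convergence (the integrands are controlled by $|f|^{p-2}\Gamma(f,f)$, which is integrable under the standing smoothness assumption on $f$). This justification is entirely analogous to the one already implicitly used in Section~\ref{seclp} when integrating by parts in \eqref{eqpoingenebad}, and presents no new difficulty.
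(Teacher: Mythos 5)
Your proof is correct and follows essentially the same route as the paper: both directions are obtained by applying \eqref{eqlpmed} to $\mathrm{sign}(f)\,|f|^{p/2}$ (resp. $\mathrm{sign}(g)\,|g|^{2/p}$), using that these odd increasing maps preserve the median and that the chain rule makes the powers of $|g|$ cancel exactly. The only cosmetic difference is in handling the non-smoothness of $s\mapsto \mathrm{sign}(s)|s|^{2/p}$ at the origin: the paper uses a piecewise-defined truncation (linear on $\{|f|\le s\}$) and lets $s\downarrow 0$, whereas you mollify with $s(s^2+\varepsilon)^{(h-1)/2}$ and let $\varepsilon\downarrow 0$; both limits are justified by the same uniform bound on $|\phi(f)|^{p-2}(\phi'(f))^2$.
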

\begin{proof}
Let $f$ with $m_\mu(f)=0$. If \eqref{eqlpmed} holds for $p=2$ (i.e. the Poincar\'e inequality
holds thanks to \eqref{eqcompmilman}), we apply it with $g= sign(f) \, |f|^{p/2}$ and get $$\int
|f|^p \, d\mu = \int g^2 \, d\mu \leq B(2) \, \frac{p^2}{4} \, \int \, |f|^{p-2} \, \Gamma(f,f) \,
d\mu \, ,$$ i.e. \eqref{eqlpmed} holds for $p$ with $B(p)\leq \frac{p^2}{4} \, B(2)$.

Conversely if \eqref{eqlpmed} holds for some $p\geq 2$, we apply it with the function $$g =
sign(f) \, |f|^{2/p} \, \BBone_{|f|\geq s} + s^{\frac{2-p}{p}} \, f \, \BBone_{|f|\leq s} \, ,$$
defined for $s>0$. We thus obtain
$$\int_{|f|\geq s} \,  |f|^2 d\mu + s^{2-p} \, \int_{|f|< s} \, |f|^p \,
d\mu = \int |g|^p \, d\mu$$ and
\begin{eqnarray*}
\int |g|^p \, d\mu  & \leq & B(p) \left(\int_{|f|\geq s} \, |f|^{\frac 2p \, (p-2)} \,
\frac{4}{p^2} \, \, |f|^{\frac{2(2-p)}{p}} \, \Gamma(f,f) \, d\mu \right) + \\ & & + \, B(p) \,
\left(s^{2-p} \, \int_{|f|< s} \, |f|^{p-2} \, \Gamma(f,f) \, d\mu\right)\\ & \leq & B(p) \,
\left(\frac{4}{p^2} \, \int_{|f|\geq s} \, \Gamma(f,f) \, d\mu + \int_{|f|< s} \, \Gamma(f,f) \,
d\mu\right) \, ,
\end{eqnarray*}
so that by letting $s$ go to $0$ we obtain $B(2) \leq \frac{4}{p^2} \, B(p)$, hence the result.
\end{proof}

We shall now see how to use this theorem in order to study $N_p(P_tf)$.
\medskip

First recall that, according to \eqref{eqcompmilman}, \eqref{eqlpmed} for $p=2$ is equivalent to
the Poincar\'e inequality, and $$\frac 19 \, B(2) \, \leq \, C_P \, \leq \, 4 \, B(2) \, .$$ It
follows, using again \eqref{eqcompmilman} and Theorem \ref{thmsurprisemean}, that if
(\textbf{H-Poinc}) holds, for any $p\geq 2$,
\begin{eqnarray}\label{arraycomp}
2^{-p} \, N_p^p(f) & \leq & M_p^p(f) \, \leq \,  B(p) \, \int \, |f-m_\mu(f)|^{p-2} \, \Gamma(f,f)
\, d\mu  \\ & \leq & B(p) \, \delta(p-2) \, \int \, |f-\mu(f)|^{p-2} \, \Gamma(f,f) \, d\mu
\, + \nonumber \\ & + & B(p) \, \delta(p-2) \,  |\mu(f) - m_\mu(f)|^{p-2} \, \int \, \Gamma(f,f) \, d\mu \nonumber \\
& \leq & B(p) \, \delta(p-2) \, \int \, |f-\mu(f)|^{p-2} \, \Gamma(f,f) \, d\mu \, +  \nonumber
\\ & + & B(p) \, \delta(p-2)\, 2^{(p-2)/2} \, (\Var_\mu(f))^{(p-2)/2} \, \int \, \Gamma(f,f) \, d\mu \, , \nonumber
\end{eqnarray}
where we have used $$ |\mu(f) - m_\mu(f)| \leq \sqrt{2} \, (\Var_\mu(f))^{1/2}$$ (see the proof of
Lemma 2.1 in \cite{Emil1}) and $$(u+v)^p \leq \delta(p) (u^p+v^p)$$ for any non-negative $u$ and
$v$ and any $p\geq 0$, with $\delta(p)=2^{p-1}$ if $p \geq 1$ and $\delta(p)=1$ if $0\leq p \leq
1$; hence finally $\delta(p)=1\vee 2^{p-1}$.
\medskip

Now consider, for $p\geq 2$, the following ``entropy functional'' (in the terminology of P.D.E.
specialists)
\begin{equation}\label{eqfunc}
E_p(f) = a_p \, N_p^p(f) + b_p \, (\Var_\mu(f))^{p/2} \, \leq \, (a_p+b_p) \, N_p^p(f) \, ,
\end{equation}
where $a_p$ and $b_p$ are positive constants to be chosen later. Remark first that using
Poincar\'e inequality and \eqref{arraycomp}, we have (remember $B(p)\le 9\, C_P \, p^2/4$)
\begin{equation}\label{simpoinc}
E_p(f) \leq A(p) + D(p) \, ,
\end{equation}
where
\begin{eqnarray*}
A(p)&=& a_p\, 2^p\,\frac{9C_P p^2}4\,\delta(p-2) \, \int \, |f-\mu(f)|^{p-2} \, \Gamma(f,f) \,
d\mu \, ,
\\D(p)&=& \left(a_p\, 2^p\, \frac{9C_P p^2}4\, 2^{(p-2)/2}\,\delta(p-2)+C_P
b_p\right)\,(\Var_\mu(f))^{(p-2)/2} \, \int \, \Gamma(f,f) \, d\mu.
\end{eqnarray*}
We have
\begin{eqnarray*}
\frac{d}{dt} E_p(P_tf) &=& - \, a_p \, p (p-1)\, \int \, |P_tf-\mu(P_tf)|^{p-2} \,
\Gamma(P_tf,P_tf) \, d\mu \,
\\ &&-
\, b_p \, \frac{p}{2} \, (\Var_\mu(P_tf))^{(p-2)/2} \, \int \, \Gamma(P_tf,P_tf) \, d\mu \, ,\nonumber\\
&=& -\frac{p(p-1)}{\frac{9C_P p^2}4\,\delta(p-2) 2^p}\, a_p\, 2^p\,\frac{9C_P p^2}4\,\delta(p-2) \,
\int \, |P_tf-\mu(f)|^{p-2} \, \Gamma(P_tf,P_tf) \, d\mu\\&&-\frac{b_p\frac{p}{2}}{a_p\, 2^p\, \frac{9C_P p^2}4\,
2^{(p-2)/2}\,\delta(p-2)+C_P b_p}\,\left(a_p\, 2^p\, \frac{9C_P p^2}4\, 2^{(p-2)/2}\,\delta(p-2)+C_P b_p\right)\,\\
&&\qquad\times(\Var_\mu(P_tf))^{(p-2)/2} \, \int \, \Gamma(P_tf,P_tf) \, d\mu \, . \nonumber
\end{eqnarray*}
Using \eqref{simpoinc}, and choosing $a_p,b_p$ such that
$$\frac{p(p-1)}{\frac{9C_P p^2}4\,\delta(p-2) 2^p}=\frac{b_p\frac{p}{2}}{a_p\, 2^p\, \frac{9C_P p^2}4\, 2^{(p-2)/2}\,\delta(p-2)+C_P b_p}$$
which is possible as $p\ge 2$, we thus get
$$\frac{d}{dt} E_p(P_tf)\le -\frac{p(p-1)}{\frac{9C_P p^2}4\,\delta(p-2) 2^p} E_p(P_tf)=-\gamma_p E_p(P_tf)$$
so that applying Gronwall's lemma we deduce $$N_p^p(P_tf) \leq \frac{1}{a_p} \, E_p(P_tf) \leq
e^{- \, \gamma_p \, t} \, E_p(f) \leq \frac{a_p+b_p}{a_p} \, e^{- \, \gamma_p \, t} \, N^p_p(f) \,
.$$ Putting all our results together, we have thus shown:

\begin{theorem}\label{thmavecmed}
If (\textbf{H-Poinc}) holds with constant $C_P$, then for all $p\geq 2$, $$N_p(P_tf) \leq K_p \,
e^{- \, \lambda_p \, t} \, N_p(f) \, ,$$ with $$\lambda_p =\frac{4(p-1)}{9\,p^2\, (1\vee 2^{p-3})
\, 2^p\,C_P}
 \, ,$$ and
 $$K_p^p=1+ \frac{\frac{9 p^2}4\,(p-1)2^{(3p-2)/2}\, (1\vee 2^{p-3})}{\frac{9 p^2}4\,(1\vee 2^{p-3}) \, 2^{p-1}-p+1} \, .$$
\end{theorem}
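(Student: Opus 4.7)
The plan is to follow the strategy already laid down in the discussion preceding the theorem: convert the median-based $\L^p$-Poincar\'e inequality of Theorem~\ref{thmsurprisemean} into an inequality involving the mean, absorb the remainder via an auxiliary variance term, and then run a Grönwall argument on a suitably tuned linear combination of $N_p^p$ and a power of the variance.

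More precisely, I would begin by assembling the ingredients. From Theorem~\ref{thmsurprisemean} together with the mean/median comparison $\frac{1}{2}N_p(f)\leq\|f-m_\mu(f)\|_p\leq 3N_p(f)$ and $|\mu(f)-m_\mu(f)|\leq\sqrt{2}\,\Var_\mu(f)^{1/2}$, the Poincar\'e inequality implies the chain of bounds \eqref{arraycomp}, namely
\begin{equation*}
N_p^p(f)\leq B(p)\,\delta(p-2)\,2^p\Bigl(\int|f-\mu(f)|^{p-2}\Gamma(f,f)\,d\mu\;+\;2^{(p-2)/2}\Var_\mu(f)^{(p-2)/2}\int\Gamma(f,f)\,d\mu\Bigr),
\end{equation*}
with $B(p)\leq \frac{9C_Pp^2}{4}$ and $\delta(p-2)=1\vee 2^{p-3}$. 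The point of the extra variance term is that it handles exactly the difference between integrating against $|f-\mu(f)|^{p-2}$ and $|f-m_\mu(f)|^{p-2}$, which (as the remark after Theorem~\ref{thmsurprisemean} stresses) cannot be ignored.

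Next, I would introduce the entropy-type functional
\begin{equation*}
E_p(f)=a_pN_p^p(f)+b_p\Var_\mu(f)^{p/2},
\end{equation*}
for positive constants $a_p,b_p$ to be chosen, and note the two-sided control $E_p(f)\leq (a_p+b_p)N_p^p(f)$ via Jensen. A direct computation of $\frac{d}{dt}E_p(P_tf)$ gives two dissipation terms, one proportional to $\int|P_tf-\mu(f)|^{p-2}\Gamma(P_tf,P_tf)\,d\mu$ (from differentiating $N_p^p$) and one proportional to $\Var_\mu(P_tf)^{(p-2)/2}\int\Gamma(P_tf,P_tf)\,d\mu$ (from differentiating the variance power). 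Plugging the bound above evaluated at $P_tf$ into the right-hand side, the key observation is that each of these two dissipation terms corresponds exactly to one of the two pieces on the right of \eqref{arraycomp}. Choosing $a_p,b_p>0$ so that the ratio ``prefactor in $\frac{d}{dt}E_p$'' over ``coefficient in the upper bound for $E_p$'' is the same for the two pieces, which is algebraically possible for $p\geq 2$ and yields the value $\gamma_p=\frac{4(p-1)}{9p^2(1\vee 2^{p-3})2^pC_P}$, gives the differential inequality
\begin{equation*}
\frac{d}{dt}E_p(P_tf)\leq -\gamma_p\,E_p(P_tf).
\end{equation*}

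Grönwall's lemma then produces $E_p(P_tf)\leq e^{-\gamma_p t}E_p(f)\leq (a_p+b_p)e^{-\gamma_p t}N_p^p(f)$, which on the left is at least $a_pN_p^p(P_tf)$, giving the claimed decay with $\lambda_p=\gamma_p$ and $K_p^p=(a_p+b_p)/a_p$; the explicit value of this ratio, which reduces to the expression in the statement, falls out of the specific choice of $a_p,b_p$ that was made for the balancing. The main obstacle is the bookkeeping of the constants: the use of $\delta(p-2)$, the factor $2^p$ coming from mean/median conversion, and the $B(2)\leq 9C_P/4$ from \eqref{eqcompmilman} all have to be tracked carefully so that the ratio equation defining $a_p/b_p$ is really solvable with positive constants and yields the exact $\lambda_p$ and $K_p$ stated. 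Everything else (differentiating $N_p^p$ via the chain rule and $L$-symmetry, differentiating the variance, and the final Grönwall step) is standard.
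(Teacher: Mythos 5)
Your proposal is correct and follows essentially the same route as the paper: the median-based inequality of Theorem~\ref{thmsurprisemean} combined with \eqref{eqcompmilman} and $|\mu(f)-m_\mu(f)|\leq\sqrt{2}\,\Var_\mu(f)^{1/2}$ to get \eqref{arraycomp}, then the functional $E_p(f)=a_pN_p^p(f)+b_p\Var_\mu(f)^{p/2}$ with the coefficients tuned to equalize the two dissipation ratios, and Gr\"onwall to conclude with $\lambda_p=\gamma_p$ and $K_p^p=(a_p+b_p)/a_p$. No gaps beyond the constant bookkeeping you already flag.
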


In this result, the constant $K_p$ is, at least for large $p$, smaller than the one obtained in Theorem
\ref{thmgrand} but of course the constant $\lambda_p$ is quite bad, but however better than the one in Theorem \ref{thmpetit}.

\bigskip

\section{Some final Remarks.}

We did not succeed in proving the analogue of Lemma \ref{lemdecayf} for $p>2$ (and actually we
believe that such a statement is false). Hence both Theorems \ref{thmpetit}, \ref{thmgrand}  and \ref{thmavecmed}
have their own interest.

Of course under stronger assumptions than the sole Poincar\'e inequality (logarithmic Sobolev
inequality for instance), one can improve the bounds obtained in Theorem \ref{thmmain}.
\medskip

\textbf{Extension to the non-symmetric case.}

Notice that the only point where we used symmetry is the proof of Lemma \ref{lemlogsemi}, hence of
Lemma \ref{lemdecayf}. In particular if $\mu$ is invariant but not necessarily symmetric,
(\textbf{H-Poinc}) implies exponential decay in all the $\L^p(\mu)$, $p\geq 2$, and our bounds are
available, in particular we may choose $K_p=1$.

But if $(\textbf{H-p})$ holds for some $p>2$ and with $K_p=1$ (which is crucial) then
\eqref{eqpoingenebadintegre} is satisfied, which in return implies the same decay for the dual
semi-group $P_t^*$. Hence the duality argument shows that $(\textbf{H-q})$ is satisfied for both
$P_t$ and $P_t^*$, where $q$ is the conjugate exponent of $p$. Hence (\textbf{H-Poinc}) implies
exponential decay in all the $\L^p(\mu)$, $1<p<+\infty$.
\medskip

Conversely, assume that $(\textbf{H-p})$ holds for some $p>2$ and with $K_p=1$ (which is still
crucial). The previous argument shows that $(\textbf{H-q})$ is satisfied. The Riesz-Thorin
interpolation theorem then shows that $(\textbf{H-s})$ is satisfied for all $q\leq s \leq p$,
hence for $s=2$. But since we do not know that $K_2=1$, we cannot conclude that the Poincar\'e
inequality is satisfied. Also note that the induction argument we used in the  proofs calls
explicitly upon the Poincar\'e inequality, so that we cannot deduce that $(\textbf{H-s})$ holds
for $s>p$.
\medskip

Finally recall that in the non-symmetric situation, exponential decay in $\L^2$ can occur while
the Poincar\'e inequality is not satisfied. Of course in this situation, $K_2 >1$. This is the
generic situation in many hypocoercive kinetic models like the kinetic Ornstein-Uhlenbeck process
studied in \cite{vil} (also see \cite{BCG} section 6).
\bigskip

\bigskip

\bibliographystyle{plain}

\begin{thebibliography}{1}

\bibitem{logsob}
C.~An{\'e}, S.~Blach{\`e}re, D.~Chafa{\"\i}, P.~Foug{\`e}res, I.~Gentil,
  F.~Malrieu, C.~Roberto, and G.~Scheffer.
\newblock {\em Sur les in{\'e}galit{\'e}s de {S}obolev logarithmiques},
  volume~10 of {\em Panoramas et Synth{\`e}ses}.
\newblock Soci{\'e}t{\'e} {M}ath{\'e}matique de {F}rance, Paris, 2000.

\bibitem{BCG}
D.~Bakry, P.~Cattiaux, and A.~Guillin.
\newblock Rate of convergence for ergodic continuous {M}arkov processes :
  {L}yapunov versus {P}oincar\'e.
\newblock {\em J. Func. Anal.}, 254:727--759, 2008.

\bibitem{cat4}
P.~Cattiaux.
\newblock A pathwise approach of some classical inequalities.
\newblock {\em Pot. Anal.}, 20:361--394, 2004.

\bibitem{CatGui3}
P.~Cattiaux and A.~Guillin.
\newblock Trends to equilibrium in total variation distance.
\newblock {\em Ann. Inst. Henri Poincar\'e. Prob. Stat.}, 45(1):117--145, 2009.
\newblock \\ \textit{also see the more complete version available on
  {A}r{X}iv.{M}ath.{PR/0703451, 2007}}.

\bibitem{DGGW}
J.~Dolbeault, I.~Gentil, A.~Guillin, and F.Y.. Wang.
\newblock $l^q$ functional inequalities and weighted porous media equations.
\newblock {\em Pot. Anal.}, 28(1):35--59, 2008.

\bibitem{Emil1}
E.~Milman.
\newblock On the role of convexity in isoperimetry, spectral-gap and
  concentration.
\newblock {\em Invent. math.}, 177:1--43, 2009.

\bibitem{rw}
M.~R{\"o}ckner and F.~Y. Wang.
\newblock Weak {P}oincar\'e inequalities and {$L^2$}-convergence rates of
  {M}arkov semi-groups.
\newblock {\em J. Funct. Anal.}, 185(2):564--603, 2001.

\bibitem{vil}
C.~Villani.
\newblock Hypocoercivity.
\newblock{\em  Mem. Amer. Math. Soc.} 202(no. 950), 2009.

\bibitem{w04jfa}
F.~Y. Wang.
\newblock Probability distance inequalities on {R}iemannian manifolds and path
  spaces.
\newblock {\em J. Func. Anal.}, 206:167--190, 2004.

\bibitem{Wu}
L.~Wu.
\newblock Poincar\'e and transportation inequalities for {G}ibbs measures under the {D}obrushin
uniqueness condition.
\newblock {\em Ann. Prob.}, 34(5):1960--1989, 2006.
\end{thebibliography}

\def\cprime{$'$}

\end{document}